\newtheorem{theorem}{Theorem}[section]
\newtheorem{proposition}[theorem]{Proposition}
\newtheorem{corollary}[theorem]{Corollary}
\theoremstyle{definition}
\newtheorem{definition}[theorem]{Definition}                  
\newtheorem{example}[theorem]{Example}
\theoremstyle{remark}
\newtheorem{rmk}[theorem]{Remark}
\newcommand{\CC}{\mathds{C}}
\newcommand{\QQ}{\mathds{Q}}
\newcommand{\FF}{\mathds{F}}
\newcommand{\PP}{\mathds{P}}
\newcommand{\A}{\mathcal{A}}
\newcommand{\B}{\mathcal{B}}
\newcommand{\R}{\mathcal{R}}
\newcommand{\C}{\mathcal{C}}
\newcommand{\M}{\mathcal{M}}
\newcommand{\ML}{\mathcal{ML}}
\renewcommand{\P}{\mathcal{P}}
\renewcommand{\L}{\mathcal{L}}
\renewcommand{\epsilon}{\varepsilon}
\newcommand{\osim}{\stackrel{\ord}{\sim}}
\newcommand{\ord}{\text{ord}}
\DeclareMathOperator{\sing}{Sing}
\DeclareMathOperator{\PGL}{PGL}
\renewcommand{\line}[5]{ 
	\draw ($ #1 - 1/100*#3*#2 + 1/100*#3*#1 $)  -- ($ #2 + 1/100*#4*#2 - 1/100*#4*#1 $);
	\node at ($ #1 - 1.3/100*#3*#2 + 1.3/100*#3*#1 $)  {#5};
}
\begin{document}

\title[On the non-connectivity of moduli spaces of arrangements]{On the non-connectivity of moduli spaces of arrangements: the splitting-polygon structure}

\author[B. Guerville-Ball\'e]{Beno\^it Guerville-Ball\'e}
\address{
	Institute of Mathematics for Industry, Kyushu University \\
	Motooka 744, Nishi-ku, Fukuoka 819-0395, Japan}
\email{benoit.guerville-balle@math.cnrs.fr }

\thanks{}				

\subjclass[2010]{
51M15, 
14N10, 
14N20, 
14H10, 
51A45, 
}		

\begin{abstract}
	Questions that seek to determine whether a hyperplane arrangement property, be it geometric, arithmetic or topological, is of a combinatorial nature (that is determined by the intersection lattice) are abundant in the literature. To tackle such questions and provide a negative answer, one of the most effective methods is to produce a counterexample. To this end, it is essential to know how to construct arrangements that are lattice-equivalent. The more different they are, the more efficient it will be.
		
	In this paper, we present a method to construct arrangements of complex projective lines that are lattice-equivalent but lie in distinct connected components of their moduli space. To illustrate the efficiency of the method, we apply it to reconstruct all the classical examples of arrangements with disconnected moduli spaces: MacLane, Falk-Sturmfels, Nasir-Yoshinaga and Rybnikov. Moreover, we employ this method to produce novel examples of arrangements of eleven lines whose moduli spaces are formed by four connected components.
\end{abstract}

\maketitle

\section*{Introduction}

As mentioned in the abstract, the questions related to the combinatorial nature of some properties of a hyperplane arrangement are numerous in the literature. If some of them have been solved by the affirmative, as for the number of chambers of a real arrangement~\cite{Zas}, the cohomology ring of the complement~\cite{OrlSol}, the rank of the lower central series quotients of the fundamental group of its complement~\cite{Falk:minimal} or the deletion and addition-deletion theorems of free arrangements~\cite{Abe:deletion,Abe:addition-deletion}; some others obtained a negative answer, as for the embedded topology of a complex arrangements or the fundamental group of its complements, see~\cite{Ryb, Gue:ZP, ACGM:arithmetic, Gue:LLN}, (also negative for the smaller class of real complexified arrangements~\cite{ACCM:topology, GueViu:configurations}), the torsion of the lower central series quotients~\cite{ AGV:torsion} or the existence of unexpected curves~\cite{GueViu:configurations}. Naturally, the number of problems which are still open (or conjectural) is larger; like the famous Terao's conjecture~\cite{Ter, OrlTer}, the combinatorial nature of the characteristic varieties~\cite{Lib} or of the homology of the Milnor fiber~\cite[Problem 4.5]{FalkRan:homotopy}, to name some but a few.
	
The aim of this paper is to provide a method to construct line arrangements with non-connected moduli spaces, and thus of lattice-equivalent arrangements which cannot be deformed one into the other continuously and equisingularly. The method starts with a line arrangement $\A$ in which we pick $r$ lines (called the \emph{support}) and $r$ singular points (called the \emph{pivot-points}) that together form a \emph{plinth} of $\A$. On this plinth, we will add $r$ lines, each one passing through a single pivot-point. These lines form a \emph{splitting-polygon} if the corners of the polygon lie on the lines of the support. In Theorem~\ref{thm:SP}, we prove that the previous construction produces two arrangements $\A^{\lambda_1}$ and  $\A^{\lambda_2}$ which lie in different connected components of their moduli space. In  Theorem~\ref{thm:irreducible}, we give a sufficient algebraic condition on the plinth to the existence of splitting-polygons. This method also provides a combinatorial pattern, called the \emph{splitting-polygon pattern}, which is a strong indicator of a potential disconnected moduli space of a line combinatorics.

We illustrate the relevance of this method by reconstructing of the all the classical arrangements which have a non-connected moduli space: the MacLane~\cite{Mac}, the Nazir-Yoshinaga~\cite{NazYos}, the Falk-Sturmfels~\cite{CohSuc:braid} and the Rybnikov~\cite{Ryb} arrangements. As a final illustration of this method, we construct several arrangements of eleven lines which have moduli spaces form by four connected components. To construct these arrangements, we add a splitting-triangle on different plinths of the MacLane arrangements. Some of these examples have the additional topological property to have non-isomorphic fundamental groups of their complements (see Theorem~\ref{thm:pi_1}).
	
The paper is organized as follows: in Section~\ref{sec:moduli}, we recall some classical definitions related to line arrangements such as: the line combinatorics, the realization space and the moduli space. Section~\ref{sec:SPS} is devoted to the main results with the construction and proof of the method. The two last sections --Section~\ref{sec:examples} and~\ref{sec:successive}-- present applications of the method with the classical examples and the construction of new arrangements of eleven lines with disconnected moduli spaces.

\section{Moduli space}\label{sec:moduli}

The purpose of this section is to recall some classical definitions associated to the combinatorics of line arrangements and their realization space. 

\begin{definition}	\label{def:combinatorics}
	A \emph{line combinatorics} $(\L,\P)$ is the data of a finite set $\L$ and a subset $\P$ of the power set of $\L$ which verify:
	\begin{itemize}
		\item for all $P\in\P$, $\# P \geq 2$,
		\item for all $L_1, L_2 \in \L$, it exists a unique $P\in\P$ such that $L_1\in P$ and $L_2\in P$.
	\end{itemize}
	An \emph{ordered line combinatorics} is a line combinatorics with a total order on $\L$.
\end{definition}

Two line combinatorics $\C_1=(\L_1,\P_1)$ and $\C_2=(\L_2,\P_2)$ are equivalent if it exists a one-to-one correspondence $\phi$ from $\L_1$ to $\L_2$ such that for all $P\in\P_1$, we have $\phi(P)=\{ \phi(L) \mid L\in P \} \in \P_2$ (or equivalently $\phi(\P_1)=\P_2$). If $\C_1$ and $\C_2$ are ordered line combinatorics and if $\phi$ respects this order, then $\C_1$ and $\C_2$ are equivalent ordered line combinatorics. 

Let $\A$ be a line arrangement and let $\sing(\A)$ be its set of singular points\footnote{In all this paper, the singular points of a line arrangements are given as the set of lines of $\A$ which pass through the singular points. This allows to fit with Definition~\ref{def:combinatorics}.}, the couple $(\A,\sing(\A))$ is a line combinatorics, called the \emph{combinatorics of $\A$}. Analogously, we can define the \emph{ordered combinatorics} of an ordered line arrangement. They will be denoted $\C(\A)$ and $\C^{\ord}(\A)$ respectively\footnote{If there is no ambiguity about the arrangement $\A$ considered, we will sometime omit the $\A$ and denote the combinatorics by $\C$ and $\C^{\ord}$.}. Two arrangements (resp. ordered arrangements) $\A_1$ and $\A_2$ are $\C$-equivalent, also called lattice-equivalent, (resp. $\C^{\ord}$-equivalent) if their combinatorics (resp. ordered combinatorics) are equivalent. These equivalences are denoted $\sim$ and $\osim$ respectively. Conversely, a \emph{realization} of a combinatorics $\C$ is a line arrangement $\A$ such that~$\C(\A)\sim \C$.

\begin{definition}
	The \emph{realization space} $\R(\C)$ of a line combinatorics $\C=(\L,\P)$ is the set of $\C$-equivalent line arrangements, that is to say
	\begin{equation*}
		\R(\C) = \{ \A \mid \C(\A)\sim \C \}.
	\end{equation*}
	We define accordingly the \emph{ordered realization space} of an ordered line combinatorics, and we denote it by~$\R^{\ord}(\C)$.
\end{definition}

The group $\PGL_3(\CC)$ naturally acts on $\R(\C)$ and $\R^{\ord}(\C)$. So, the following definition is natural.

\begin{definition}
	The \emph{moduli space} $\M(\C)$ of a line combinatorics $\C$ is the quotient of the realization space of $\C$ by the action of $\PGL_3(\CC)$. The notion of \emph{ordered moduli space} is defined accordingly.
	
	If $\B$ is an element of $\R(\C)$, then the connected component of $\M(\C)$ which contains the class of $\B$ is denoted by $\M(\C)^\B$.
\end{definition}

By an abuse of notation and denomination, we call \emph{moduli space of a line arrangement $\A$}, and denote it by $\M(\A)$, the moduli space $\M(\C(\A))$ of its combinatorics.

\begin{definition}
	Let $\A$ be a line arrangement and $\B$ a sub-arrangement of $\A$. The \emph{realization space of $\C(\A)$ relative to $\B$} is the set of $\C(\A)$-equivalent line arrangements which contains $\B$. It is denoted $\R(\C(\A);\B)$ (or $\R(\A;\B)$). We define accordingly the \emph{moduli space of  $\A$ relative to $\B$}, denoted by~$\M(\A;\B)$, and their ordered equivalents denoted with a superscript $\ord$.
	
	Let $B$ be a subset of $\M(\B)$ (usually $\{\B\}$, $\M(\B)^\B$ and $\M(\B)$). The \emph{moduli space $\M(\A)$ splits over $B$} if $ 2 \leq \# \M(\A;\B')$ for all~$\B'\in B$.
\end{definition}

\section{The splitting-polygon structure}\label{sec:SPS}\mbox{}

In order to define the \emph{splitting-polygon} structure, we need to introduce the notion of \emph{plinth} of an arrangement $\A$. It will be describe the position of the "anchor points" on $\A$ of the splitting-polygon.

\subsection{Plinth of an arrangement}\mbox{}

Let $\C=(\L,\P)$ be a line combinatorics, we denote by $\P_{\geq k}$ the subset $\{P\in\P \mid \# P \geq k \}$ of $\P$, and by $\P_k$ the subset $\P_{\geq k}\setminus \P_{\geq (k-1)}$. 

\begin{definition}
	Let $\C=(\L,\P)$ be a line combinatorics and let $3 \leq r \leq \# \A$. A \emph{plinth} $\Psi$ in $\C$ is form by two tuples\footnote{In this paper, tuples are sets with total orders, and they are denoted using parentheses. We always consider the order given by writing order of the set (i.e. in the tuple $(a,b,c,d)$, we have $a<b<c<d$).}: the \emph{support} $S=(S_1,\dots,S_r) \subset \L$ and the \emph{pivot-points} $(P_1,\dots,P_r)\subset\P$ such that, for each pivot-point $P_i$, we have $S_i\notin P_i$ and $S_{i+1}\notin P_i$.
	
A line arrangement $\A$ is said to have a plinth if its combinatorics does. 
\end{definition}

In the next section, we will need to tighten up the plinth of an arrangement. So, let us introduce the notion of rigid projective system.

\begin{definition}\label{def:RPS}
	Let $\A=\{L_1,\dots,L_n\}$  be a line arrangement and let $3 \leq r \leq n$. Two subsets: $\{L_{i_1},\dots,L_{i_r}\}$ of $\A$ and $\{P_{j_1},\dots,P_{j_r}\}$ of $\sing(\A)$, form a \emph{rigid projective system of $\M^\ord(\A)^\A$} (resp. \emph{of $\M^\ord(\A)$}) if for all arrangement $\A'=\{L_1',\dots,L_n'\}$ in $\M^\ord(\A)^\A$ (resp. in $\M^\ord(\A)$) there exists a projective transformation $\tau\in\PGL_3(\CC)$ such that $\tau(L_{i_j})=L'_{i_j}$ and $\tau(P_{i_j})=P'_{i_j}$, for $j\in\{1,\dots,r\}$. 
\end{definition}

It's obvious that a rigid projective system of $\M^\ord(\A)$ is also a rigid projective system of $\M^\ord(\A)^\A$. Nevertheless, the converse does not seem clear to us.

\begin{example}\label{ex:plinth}
	We consider the arrangement $\A=\{L_1,\dots,L_6\}$ form by 6 lines with one quadruple points $\{L_1,L_2,L_3,L_4\}$, one triple points $\{L_1,L_5,L_6\}$, and all the other points are double points (see Figure~\ref{fig:arr_1}). The tuples $S=(L_1,L_2,L_5)$ and $P=(\{L_2,L_6\},\{L_3,L_6\},\{L_3,L_5\})$ form a rigid projective system; while the tuples $S$ and $P'=(\{L_4,L_6\},\{L_3,L_6\},\{L_3,L_5\})$ do not. Indeed, if a projective transformation fixes $S$ and $\{L_3,L_6\}$, then it also fixes $\{L_2,L_6\}$ and $\{L_3,L_5\}$, but it does not fix $\{L_4,L_6\}$ since the line $L_4$ can be any line of the line pencil define by the quadruple point.
\end{example}

\begin{figure}[h!]
	\begin{tikzpicture}
		\begin{scope}[yscale=0.66]
			\draw (0,5) -- (0,-1);
			\draw (-1/6,5) -- (1,-1);
			\draw (-2/6,5) -- (2,-1);
			\draw (-3/6,5) -- (3,-1);
			\draw (-1,0) -- (4, 0);
			\draw (-1,-1/2) -- (4, 2); 
			\node[below] at (0,-1) {$L_1$};
			\node[below] at (1,-1) {$L_2$};
			\node[below] at (2,-1) {$L_3$};
			\node[below] at (3,-1) {$L_4$};
			\node[left] at (-1,0) {$L_5$};
			\node[below] at (-1,-1/2) {$L_6$};
		\end{scope}
	\end{tikzpicture}
	\caption{Arrangement with one quadruple and one tripe point. \label{fig:arr_1}}
\end{figure}
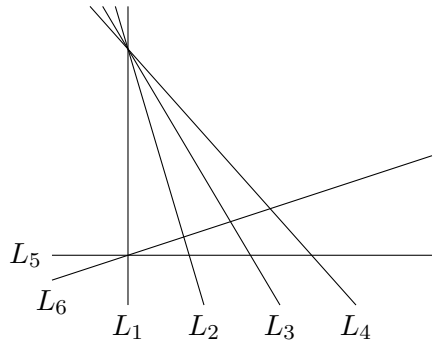

The following proposition follows directly from the definitions of the moduli space and of a rigid projective system.

\begin{proposition}\label{propo:rigidity}
	If $\A$ is a line arrangement such that $\M^\ord(\A)^\A$ has dimension $0$, then any subsets $\{L_{i_1},\dots,L_{i_r}\}$ of $\A$, and $\{P_{j_1},\dots,P_{j_r}\}$ of $\sing(\A)$ form a rigid projective system of $\M^\ord(\A)^\A$.
\end{proposition}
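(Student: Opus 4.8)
The plan is to unwind the definitions and observe that the rigidity condition we must verify is precisely what a $0$-dimensional ordered moduli space provides. Recall that $\M^\ord(\A)^\A$ is, by definition, the connected component of the ordered moduli space $\M^\ord(\A)$ containing the class of $\A$; as a quotient of the ordered realization space $\R^\ord(\A)$ by $\PGL_3(\CC)$, a point of $\M^\ord(\A)^\A$ is a $\PGL_3(\CC)$-orbit of ordered arrangements $\C^\ord$-equivalent to $\A$ and lying in the same connected component as $\A$.

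First I would note that the hypothesis $\dim \M^\ord(\A)^\A = 0$, together with connectedness of this component, forces $\M^\ord(\A)^\A$ to be a single point: a connected $0$-dimensional (quasi-projective) variety is reduced to one point. Hence every ordered arrangement $\A' = \{L_1',\dots,L_n'\}$ lying in $\M^\ord(\A)^\A$ represents the same class as $\A$, which is to say there exists $\tau \in \PGL_3(\CC)$ with $\tau(\A) = \A'$ as \emph{ordered} arrangements, i.e. $\tau(L_i) = L_i'$ for every $i \in \{1,\dots,n\}$.

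Then I would simply transport the singular points along $\tau$. Since $\tau$ is a projective transformation carrying each line $L_i$ to $L_i'$, it carries the intersection $L_a \cap L_b$ to $L_a' \cap L_b'$ for all $a,b$, and more generally it induces the identification of $\sing(\A)$ with $\sing(\A')$ compatible with the combinatorial labelling coming from $\C^\ord(\A) \osim \C^\ord(\A')$; in particular $\tau(P_{j_k}) = P_{j_k}'$ for each $k$. Thus for the given subsets $\{L_{i_1},\dots,L_{i_r}\} \subset \A$ and $\{P_{j_1},\dots,P_{j_r}\} \subset \sing(\A)$ the transformation $\tau$ satisfies both required equalities $\tau(L_{i_j}) = L_{i_j}'$ and $\tau(P_{i_j}) = P_{i_j}'$, which is exactly the definition of a rigid projective system of $\M^\ord(\A)^\A$. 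Since the subsets were arbitrary, the proposition follows.

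There is no real obstacle here; the only point requiring a little care is the passage from "$\dim = 0$ and connected" to "single point", which uses that the relevant objects are genuine (quasi-projective) varieties rather than arbitrary topological spaces — but this is exactly the sense in which the moduli space is used throughout, so I would state it as such and move on. The statement is essentially a bookkeeping corollary of the definitions, which is presumably why the authors flag it as following "directly."
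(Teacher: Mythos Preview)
Your proof is correct and is exactly the argument the paper has in mind: the authors give no proof beyond noting that the proposition ``follows directly from the definitions of the moduli space and of a rigid projective system,'' and your unwinding of those definitions (connected $0$-dimensional component $\Rightarrow$ single $\PGL_3(\CC)$-orbit $\Rightarrow$ a single $\tau$ matches all lines, hence all singular points) is precisely that direct verification.
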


\subsection{The splitting-polygon structure}\label{sec:construction}\mbox{}

Let $\A$ be a line arrangement such that, for a fixed $3 \leq r\leq \# \A$, the lines $(S_1,\dots,S_r)\in\A$ and the singular points $(P_1,\dots,P_r)\in\sing(\A)$ form a plinth $\Psi$.  

Let $Q^\lambda_1$ be a generic point of $S_1$ which is determined by a parameter $\lambda\in\CC$. We define $E^\lambda_1$ as the line which passes through $Q^\lambda_1$ and $P_1$. This line intersects $S_2$ in a point denoted $Q^\lambda_2$. We define recursively the lines $E^\lambda_i$ as the lines which pass through $Q^\lambda_i$ and $P_i$, and the points $Q^\lambda_{i+1}$ as the intersection points of $E^\lambda_i$ and $S_{i+1}$. At the end, we define $R^\lambda_1$ as the intersection point of $E^\lambda_r$ and~$S_1$ (see Figure~\ref{fig:SP} for an illustration of the construction when $r=3$). We denote by $A^\lambda$ the arrangement $\A \cup \{E^\lambda_1,\dots,E^\lambda_r\}$.

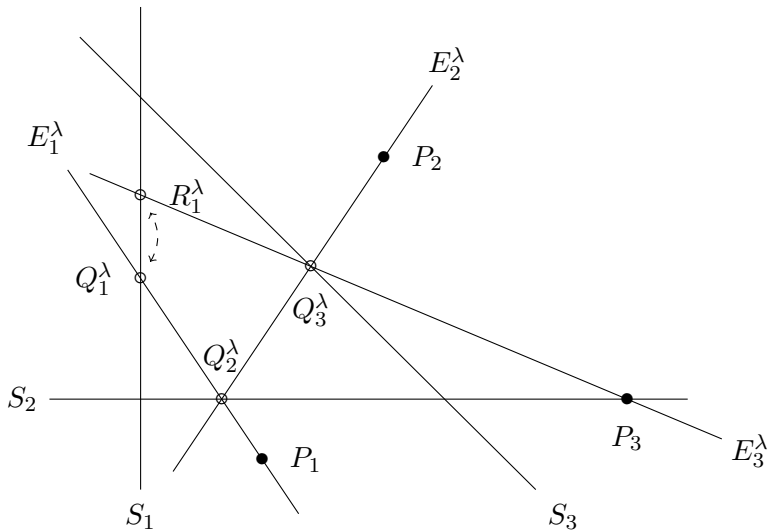
\begin{figure}[h!]
	\begin{tikzpicture}
		\begin{scope}[scale=0.8]
			\node (C1) at (0,0) {};
			\node (C2) at (5,0) {};
			\node (C3) at (0,5) {};
			\line{(C1)}{(C3)}{30}{30}{$S_1$};
			\line{(C1)}{(C2)}{30}{80} {$S_2$};
			\line{(C2)}{(C3)}{30}{20}{$S_3$};
			\node[label=left:$Q_1^\lambda$] (Q1) at (0,2) {$\circ$};
			\node[label=right:$P_1$] (P1) at (2,-1) {$\bullet$};
			\line{(Q1)}{(P1)}{60}{30}{$E_1^\lambda$};
			\node[label=above:$Q_2^\lambda$] (Q2) at (4/3,0) {$\circ$};
			\node[label=right:$P_2$] (P2) at (4,4) {$\bullet$};
			\line{(P2)}{(Q2)}{30}{30}{$E_2^\lambda$};
			\node[label=below:$Q_3^\lambda$] (Q3) at (2.8,5-2.8) {$\circ$};
			\node[label=below:$P_3$] (P3) at (8,0) {$\bullet$};
			\line{(P3)}{(Q3)}{30}{70}{$E_3^\lambda$};
			\node[label=right:$R_1^\lambda$] (R1) at (0,3.3666666) {$\circ$};
			\draw[<->, dashed] (Q1) to[out=60, in=300] (R1);
		\end{scope}
	\end{tikzpicture}
	\caption{Construction of a splitting-polygon for $r=3$. The pivot-points are noted with a $\bullet$, while the corners of the polygon are represented by a $\circ$.  \label{fig:SP}}
\end{figure}

\begin{definition}\label{def:SP}
	The tuple $E^\lambda=(E^\lambda_1,\dots,E^\lambda_r)$ forms a \emph{splitting-polygon} on the plinth $\Psi$ if:
	\begin{enumerate}
		\item $Q^\lambda_1 = R^\lambda_1$,
		\item for all $i\in\{1,\dots,n\}$, we have $E^\lambda_i \notin \A$,
		\item each line $E^\lambda_i$ contains $ \# \A + r - \# P_i - 2 $ singular points in $\A^\lambda$.
	\end{enumerate}
\end{definition}

The first condition is that $(E^\lambda_1\dots,E^\lambda_r)$ form a polygon whose $i$th corners (i.e. the intersection point of the two successive edges $E_i^\lambda$ and $E_{i+1}^\lambda$) is on the $i$th line of support of the plinth~$\Psi$; while second and third conditions are equivalent to the fact that, for all $i\in\{1,\dots,r\}$, the point $Q_i^\lambda$ is in $\sing(\A)_{3}$, and the line $E^\lambda_i$ intersects $\A^\lambda$ generically excepts in $Q_{i-1}$, $Q_i$ and $P_i$. If $E^\lambda$ is a splitting-polygon on the plinth $\Psi$ of $\A$, then the combinatorics of $\A\cup E^\lambda$ is denoted by $\C_\Psi$. In Figure~\ref{fig:SP}, the lines of $E^\lambda$ form a splitting-triangle when $Q_1^\lambda$ and $R_1^\lambda$ coincide (it is represented by the dashed arrow).

\begin{theorem}\label{thm:SP}
	Let $(S_1,\dots,S_r)$ and $(P_1,\dots,P_r)$ be a plinth $\Psi$ of an arrangement $\A$ which form a rigid projective system of  $\M^\ord(\A)^\A$. If $E^{\lambda_1}$ and $E^{\lambda_2}$ are two distinct splitting-polygons on $\Psi$, then $\M^\ord(\C_\Psi)$ splits over $\M^\ord(\A)^\A$. More precisely, $\# \M^\ord(\C_\Psi;\M^\ord(\A)^\A) = 2$, and $\A^{\lambda_1}$ and $\A^{\lambda_2}$ are in different connected components of $\M^\ord(\C_\Psi)$.
\end{theorem}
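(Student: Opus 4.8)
The plan is to study the natural ``forget the polygon'' map $\Pi\colon\M^\ord(\C_\Psi)\to\M^\ord(\C(\A))$, to describe its fibres over the component $\M^\ord(\A)^\A$ as the fixed--point set of a projectivity of $\PP^1$ attached to the plinth, to deduce that every such fibre has exactly two points, and finally to forbid a path between $[\A^{\lambda_1}]$ and $[\A^{\lambda_2}]$ by reducing it, via the rigid projective system, to a continuous path inside that two--element set.

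First I would set up $\Pi$. Removing the $r$ lines $E_i$ from a realization of $\C_\Psi$ produces a realization of $\C(\A)$ (deleting lines creates no new incidences among the remaining ones, so the restriction of $\C_\Psi$ to the first $\#\A$ lines is $\C(\A)$), and this operation is continuous and $\PGL_3(\CC)$--equivariant, hence descends to a continuous $\Pi\colon\M^\ord(\C_\Psi)\to\M^\ord(\C(\A))$ with $\Pi^{-1}([\B])=\M^\ord(\C_\Psi;\B)$. Now fix $[\B]\in\M^\ord(\A)^\A$. Since the plinth is a rigid projective system (Definition~\ref{def:RPS}), I may choose the representative $\B$ so that its plinth is literally $(S_1,\dots,S_r)$, $(P_1,\dots,P_r)$; then an element of $\Pi^{-1}([\B])$ is represented by $\B\cup\{F_1,\dots,F_r\}$ where $\C_\Psi$ forces $P_i\in F_i$, forces $\{S_i,F_{i-1},F_i\}$ to be a triple point $Q_i$, and forces transversality elsewhere. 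Setting $Q_1=F_1\cap S_1$, the lines $F_i$ and the points $Q_i$ are recovered exactly as in Section~\ref{sec:construction}, and the closing condition $Q^\lambda_1=R^\lambda_1$ becomes $f(Q_1)=Q_1$, where $f=\pi_{P_r}\circ\cdots\circ\pi_{P_1}\colon S_1\to S_1$ is the composition of the perspectivities $\pi_{P_i}\colon S_i\to S_{i+1}$ (well defined as perspectivities since $P_i\notin S_i\cup S_{i+1}$). Crucially, $f$ is a projectivity of $S_1\cong\PP^1$ depending only on the plinth, hence the same for every $\B$ in the component once the plinth is normalized this way.

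Next, the counting. The two given splitting--polygons yield two distinct fixed points $Q^{\lambda_1}_1\neq Q^{\lambda_2}_1$ of $f$, so $f$ is non--parabolic, and it is not the identity (otherwise the construction would produce a connected positive--dimensional family of realizations of $\C_\Psi$ over $[\A]$, incompatible with the conclusion); hence $\mathrm{Fix}(f)=\{Q^{\lambda_1}_1,Q^{\lambda_2}_1\}$. Since the remaining conditions (2)--(3) hold at these two points for every $\B$ in the component, each fibre $\Pi^{-1}([\B])$ has exactly two points; in particular $2\le\#\M^\ord(\C_\Psi;\B)$ for all $\B\in\M^\ord(\A)^\A$, i.e. $\M^\ord(\C_\Psi)$ splits over $\M^\ord(\A)^\A$, and $\#\M^\ord(\C_\Psi;\M^\ord(\A)^\A)=2$. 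Finally, to separate the two components, assume toward a contradiction that $[\A^{\lambda_1}]$ and $[\A^{\lambda_2}]$ lie in the same component of $\M^\ord(\C_\Psi)$; equivalently, $\A^{\lambda_1}$ and $\sigma\cdot\A^{\lambda_2}$ lie in one path--component of $\R^\ord(\C_\Psi)$ for some $\sigma\in\PGL_3(\CC)$, say via a path $t\mapsto\mathcal{G}_t$. Its sub--arrangement part traces a path in $\R^\ord(\A)^\A$ from $\A$ to $\sigma\cdot\A$; using the rigid projective system (and the freeness of the $\PGL_3(\CC)$--action, which lets the normalizing map be chosen continuously) pick $\tau_t\in\PGL_3(\CC)$ carrying the plinth of $\A$ to the plinth of $\mathcal{G}_t$, with $\tau_0=\mathrm{id}$ and $\tau_1=\sigma$. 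Then $\tau_t^{-1}\cdot\mathcal{G}_t$ has the \emph{fixed} plinth $(S_i),(P_i)$ with a splitting--polygon on it, whose corner on $S_1$ is a point $Q_1(t)\in\mathrm{Fix}(f)$ depending continuously on $t$; hence $Q_1(t)$ is constant. But $Q_1(0)=Q^{\lambda_1}_1$ while $Q_1(1)=\tau_1^{-1}\sigma(Q^{\lambda_2}_1)=Q^{\lambda_2}_1$, a contradiction.

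I expect the main obstacle to be the last step: one must make precise the ``normalize the plinth along the path'' move, i.e. verify that the $\PGL_3(\CC)$--action on the ordered realization space is free on the relevant locus (no realization in $\M^\ord(\A)^\A$ has a nontrivial ordered projective automorphism), so that the $\tau_t$ can be chosen continuously and the endpoint identifications $\tau_0=\mathrm{id}$, $\tau_1=\sigma$ are legitimate; once this is granted the argument is soft, since a continuous map into a two--point set is constant. Two lesser points of care are ruling out $f=\mathrm{id}$ and checking that conditions (2)--(3) do persist over all of $\M^\ord(\A)^\A$, so that each fibre really has two points and not one.
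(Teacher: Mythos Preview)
Your approach matches the paper's. The paper writes the closing condition $Q_1^\lambda=R_1^\lambda$ as the vanishing of a determinant $\Delta_\Psi(\lambda)$ of degree at most $2$ (since the coefficients of each $E_i^\lambda$ are linear in $\lambda$), while you phrase it as the fixed-point equation of the composite perspectivity $f=\pi_{P_r}\circ\cdots\circ\pi_{P_1}\colon S_1\to S_1$; these are equivalent formulations, the roots of $\Delta_\Psi$ being precisely $\mathrm{Fix}(f)$. The separation step---rigidify the plinth along a hypothetical path and use discreteness of the set of splitting-polygons to force constancy---is then identical in both proofs.

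The gaps you flag are real and are shared with the paper. Ruling out $f=\mathrm{id}$ (equivalently $\Delta_\Psi\equiv0$) is tacitly assumed in the paper's step ``degree at most $2$ $\Rightarrow$ at most $2$ splitting-polygons''; your parenthetical justification (``incompatible with the conclusion'') is circular as written, and the honest fix is simply to check $\Delta_\Psi\not\equiv0$ in each application, as is in fact done throughout Sections~\ref{sec:examples}--\ref{sec:successive}. Your concern about continuously choosing $\tau_t$ is exactly what underlies the paper's one-line ``we can consider that both the support and the pivot-points of $\Psi$ are constant along $\A_t$''; the paper does not spell out the freeness argument either. Finally, the persistence of conditions (2)--(3) of Definition~\ref{def:SP} over all of $\M^\ord(\A)^\A$ is likewise not addressed in the paper and matters only for the precise count $=2$, not for the main conclusion that $\A^{\lambda_1}$ and $\A^{\lambda_2}$ lie in different components.
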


\begin{proof}
	According to Definition~\ref{def:SP}, the arrangement $\A^\lambda$ admits a splitting-polygon on the plinth $\Psi$ only if the points $Q^\lambda_1$ and $R^\lambda_1$ are equal, or equivalently if the lines $S_1$, $E^\lambda_1$ and $E^\lambda_r$ are concurrent. Let $\Delta_{\Psi}$ be the determinant of the matrix formed by the coefficients of these three lines equations. Since the equation of $S_1$ is independent on $\lambda$, and since the coefficients of $E^\lambda_1$ and $E^\lambda_r$ are linear in $\lambda$, then $\Delta_{\Psi}$ is a polynomial of degree at most 2 of $\CC[\lambda]$. It follows that it exists at most 2 splitting-polygons on $\Psi$. This upper-bound is an equality since $E^{\lambda_1}$ and $E^{\lambda_2}$ are two distinct splitting-polygons on the plinth $\Psi$ (that is $\lambda_1$ and $\lambda_2$ are two distinct roots of $\Delta_{\Psi}$ in $\CC$).

	Since $E^{\lambda_1}$ and $E^{\lambda_2}$ are two splitting-polygons on the plinth $\Psi$, then $\A^{\lambda_1}$ and $\A^{\lambda_2}$ are $\C_\Psi^\ord$-equivalent, and so are representatives of two classes in $\M^\ord(\C_\Psi)$ (a priori, not necessarily distinct nor in different connected components). So, let assume that $\A^{\lambda_1}$ and $\A^{\lambda_2}$ are in the same connected component of $\M^\ord(\C_\Psi;\M^\ord(\A)^\A)$. That is, it exists a continuous path of $\C_\Psi^\ord$-equivalent arrangements $\A_t$, with $t \in [0,1]$, such that $\A_0=\A^{\lambda_1}$ and  $\A_1=\A^{\lambda_2}$. Since the elements of $\M^\ord(\C_\Psi)$ are considered up to projective transformation, and since the plinth $\Psi$ forms a rigid projective system then we can consider that both, the support and the pivot-points of $\Psi$, are constant along $\A_t$. By the first paragraph of this proof, we know that there are a finite number of possible splitting-polygons on $\Psi$. Thus, since $\Psi$ is constant along $\A_t$, then the splitting-polygon cannot vary along the path $\A_t$. This induces an incompatibility with the assumption that $\A^{\lambda_1}$ and $\A^{\lambda_2}$ are in the same connected components of $\M^\ord(\C_\Psi;\M^\ord(\A)^\A)$. Due to the fact that it exists exactly two splitting-polygons on $\Psi$, we deduce that $\# \M^\ord(\C_\Psi;\M^\ord(\A)^\A) = 2$.
	
	If there is a continuous path between $\A^{\lambda_1}$ and $\A^{\lambda_2}$ in $\M^\ord(\C_\Psi)$ then it has to stay inside the relative moduli space $\M^\ord(\C_\Psi;\M^\ord(\A)^\A)$ since $\A^{\lambda_1}$ contains $\A$ (by construction). Thus, the previous paragraph produces the obstruction.
\end{proof}

As a direct consequence of the proof of Theorem~\ref{thm:SP}, we have the following proposition.

\begin{proposition}\label{prop:dimension}
	Let $(S_1,\dots,S_r)$ and $(P_1,\dots,P_r)$ be a plinth $\Psi$ of an arrangement $\A$ which form a rigid projective system of  $\M^\ord(\A)$. If $\M^\ord(\C_\Psi)$ is not empty then the moduli spaces $\M^\ord(\A)$ and $\M^\ord(\C_\Psi)$ have the same dimension.
\end{proposition}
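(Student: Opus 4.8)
The plan is to compare the moduli spaces $\M^\ord(\A)$ and $\M^\ord(\C_\Psi)$ by analyzing the forgetful map that deletes the $r$ lines $E^\lambda_1,\dots,E^\lambda_r$ of the splitting-polygon. First I would observe that there is a natural restriction map $\pi\colon\M^\ord(\C_\Psi;\M^\ord(\A))\to\M^\ord(\A)$ sending the class of an arrangement $\A'\cup E'$ to the class of its sub-arrangement $\A'$; since every realization of $\C_\Psi$ relative to $\M^\ord(\A)$ contains a realization of $\A$ as a sub-arrangement in the prescribed combinatorial position, this map is well-defined and surjective onto its image. Because $(S_1,\dots,S_r)$ and $(P_1,\dots,P_r)$ form a rigid projective system of $\M^\ord(\A)$ (note: here we use rigidity over all of $\M^\ord(\A)$, not just the component of $\A$, which is exactly why the hypothesis is strengthened compared to Theorem~\ref{thm:SP}), over each point of $\M^\ord(\A)$ the plinth can be placed in a fixed position, so the fiber of $\pi$ is governed entirely by the choice of the parameter $\lambda$.

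Next I would invoke the determinant computation from the first paragraph of the proof of Theorem~\ref{thm:SP}: once the plinth is fixed, the splitting-polygon condition $Q^\lambda_1=R^\lambda_1$ is a single polynomial equation $\Delta_\Psi(\lambda)=0$ of degree at most $2$ in $\lambda$, hence its solution set is finite (and nonempty precisely when $\M^\ord(\C_\Psi)$ is nonempty). Therefore every fiber of $\pi$ is a finite nonempty set, which shows that $\pi$ is a finite surjection and hence that $\dim \M^\ord(\C_\Psi;\M^\ord(\A)) = \dim \M^\ord(\A)$. Finally, since $\A^{\lambda}$ contains $\A$ by construction, the relative moduli space $\M^\ord(\C_\Psi;\M^\ord(\A))$ is an open and closed subset of (in fact, conjecturally all of) $\M^\ord(\C_\Psi)$ that meets every component intersecting the locus where the $\A$-part is a genuine realization of $\A$; more directly, the deletion map $\M^\ord(\C_\Psi)\to\M^\ord(\A)$ is globally defined, finite onto its image, and its image is all of $\M^\ord(\A)$, so $\dim\M^\ord(\C_\Psi)=\dim\M^\ord(\A)$.

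The main obstacle I anticipate is making the passage from the set-theoretic "finite fibers" statement to an actual equality of dimensions rigorous in the category in which these moduli spaces naturally live (quasi-projective varieties, or constructible sets). One must check that $\pi$ is not merely a finite-to-one map of sets but a morphism of varieties with the appropriate properties — equivalently, that the assignment $\lambda\mapsto E^\lambda_i$ depends algebraically on the parameters describing $\A$ and on $\lambda$, and that $\Delta_\Psi$ varies algebraically over $\M^\ord(\A)$ — so that one may apply the theorem on the dimension of fibers of a dominant morphism. A secondary subtlety is that $\Delta_\Psi$ could in principle degenerate (drop degree, or vanish identically) over a proper subvariety of $\M^\ord(\A)$; the hypothesis that $\M^\ord(\C_\Psi)\neq\emptyset$ guarantees that the fiber is nonempty for at least one point, and a semicontinuity argument then keeps the generic fiber finite and nonempty, which is all that is needed for the dimension count.
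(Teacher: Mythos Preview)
Your approach is essentially the same as the paper's: the paper states only that the proposition is ``a direct consequence of the proof of Theorem~\ref{thm:SP}'', and the content of that reference is precisely what you unpack---the forgetful map $\M^\ord(\C_\Psi)\to\M^\ord(\A)$ has fibers of cardinality at most $2$ because, after rigidifying the plinth, the splitting-polygon is determined by a root of the degree-$\leq 2$ polynomial $\Delta_\Psi$. Your write-up is considerably more explicit than the paper's one-line pointer and correctly identifies the technical caveats (algebraicity of $\pi$, possible degeneration of $\Delta_\Psi$, surjectivity) that the paper does not address.
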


A way to predict the existence of two distinct splitting-polygons is to study the polynomial $\Delta_{\Psi}$. The lines equations of $\A$ have their coefficients in $\CC$. Nevertheless, it is possible to consider them in a field extension of $\QQ$ (since they are solutions of polynomial equations with integral coefficients). We denote by $\FF$ such a definition field of $\A$. By definition field of $\A$, we mean a field which contains all the lines coefficients of $\A$ (it is worth to notice that it can differ from a field which contains the coefficients of an equation of $\A$\footnote{There exist arrangements with rational equation but whose lines have non-real equations.}). Of course, if $\FF'$ is an extension of $\FF$ then it is also a definition field of $\A$. 

\begin{theorem}\label{thm:irreducible}
	Let $\FF$ be a definition field of $\A$, and let $(S_1,\dots,S_r)$ and $(P_1,\dots,P_r)$ be a plinth $\Psi$ of $\A$ which form a rigid projective system of  $\M^\ord(\A)^\A$.  If $\Delta_{\Psi}$ is non-trivial and is irreducible in $\FF[\lambda]$ then $\A^{\lambda_1}$ and $\A^{\lambda_2}$ are $\C^\ord_\Psi$-equivalent and the moduli space $\M^\ord(\C_\Psi)$ splits over $\M^\ord(\A)^\A$. More precisely, $\# \M^\ord(\C_\Psi;\M^\ord(\A)^\A) =2$, and $\A^{\lambda_1}$ and $\A^{\lambda_2}$ are in distinct connected-components of $\M^\ord(\C_\Psi)$.
\end{theorem}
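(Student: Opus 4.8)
The plan is to deduce the statement from Theorem~\ref{thm:SP}, by turning the arithmetic hypothesis on $\Delta_\Psi$ into the existence of two distinct splitting‑polygons on $\Psi$, via a Galois‑conjugation argument. First I would record that the construction of Section~\ref{sec:construction} is rational over $\FF$: parametrising $Q^\lambda_1$ affinely along $S_1$, each line $E^\lambda_i$ and each corner $Q^\lambda_i$ is obtained from the lines of $\A$ and the points $P_i$ by iterated cross‑products in $\PP^2$, hence admits a representative with coefficients in $\FF[\lambda]$; in particular $\Delta_\Psi\in\FF[\lambda]$. By the first paragraph of the proof of Theorem~\ref{thm:SP}, $\deg\Delta_\Psi\le 2$ and the roots of $\Delta_\Psi$ in $\CC$ are exactly the parameters for which condition~(1) of Definition~\ref{def:SP} holds. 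Since $\Delta_\Psi$ is non‑trivial and irreducible in $\FF[\lambda]$ it has degree $2$; as $\FF$ has characteristic $0$ it is separable, so it has two distinct roots $\lambda_1\ne\lambda_2$ in $\overline{\FF}\subseteq\CC$, and there is a field isomorphism $\sigma\colon\FF(\lambda_1)\to\FF(\lambda_2)$ fixing $\FF$ pointwise with $\sigma(\lambda_1)=\lambda_2$.

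The heart of the argument is to conjugate one splitting‑polygon into the other. By definition $\C_\Psi$ is the combinatorics of $\A\cup E^{\lambda}$ for some splitting‑polygon $E^{\lambda}$ on $\Psi$; such a $\lambda$ satisfies condition~(1), hence is a root of $\Delta_\Psi$, and relabelling if needed we may assume $E^{\lambda_1}$ is a splitting‑polygon. All lines and singular points of $\A^{\lambda_1}$ then have coordinates in $\FF(\lambda_1)$. Applying $\sigma$ coefficient‑wise fixes every line of $\A$ (its coefficients lie in $\FF$) and sends each $E^{\lambda_1}_i$ to $E^{\lambda_2}_i$, because the cross‑products defining $E^\lambda_i$ are polynomial in $\lambda$ and commute with the ring homomorphism $\sigma$, while injectivity of $\sigma$ prevents it from destroying the non‑degeneracies (distinctness of the constructed lines, non‑vanishing of the relevant coefficient vectors) in force for $E^{\lambda_1}$. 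Thus $\sigma$ carries $\A^{\lambda_1}$ onto $\A^{\lambda_2}$, line by line and respecting the construction order.

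Now every incidence entering the ordered combinatorics --- several lines being concurrent, two lines coinciding, a point lying on a line --- is the vanishing or non‑vanishing of a determinant or a minor whose entries are polynomial over $\FF$ in the line coefficients, and such conditions are preserved by the $\FF$‑isomorphism $\sigma$. Hence $\A^{\lambda_1}$ and $\A^{\lambda_2}$ have the same ordered combinatorics, i.e. they are $\C^\ord_\Psi$‑equivalent; in particular the purely combinatorial conditions (1)--(3) of Definition~\ref{def:SP} hold for $E^{\lambda_2}$, so $E^{\lambda_2}$ is again a splitting‑polygon on $\Psi$. Moreover $E^{\lambda_1}\ne E^{\lambda_2}$: the parameter being a faithful coordinate along $S_1$, $\lambda_1\ne\lambda_2$ gives $Q^{\lambda_1}_1\ne Q^{\lambda_2}_1$, and if $E^{\lambda_1}_1=E^{\lambda_2}_1$ then that common line would meet $S_1$ in two distinct points and therefore equal $S_1$, contradicting $S_1\notin P_1$ since it also passes through $P_1$. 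So $E^{\lambda_1}$ and $E^{\lambda_2}$ are two distinct splitting‑polygons on the plinth $\Psi$, and Theorem~\ref{thm:SP} applies and gives $\#\M^\ord(\C_\Psi;\M^\ord(\A)^\A)=2$ together with the fact that $\A^{\lambda_1}$ and $\A^{\lambda_2}$ lie in distinct connected components of $\M^\ord(\C_\Psi)$.

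I expect the main obstacle to be making the transfer step watertight rather than the Galois input itself: one must check that the recursive construction of the $E^\lambda_i$ really carries no denominator that $\sigma$ could send to $0$ (so that the conjugation can be pushed through it), and that condition~(3) of Definition~\ref{def:SP} --- the genericity of the intersections of each $E^\lambda_i$ with $\A^\lambda$ --- is genuinely a finite conjunction of (non‑)vanishing statements for $\FF$‑polynomials in the coefficients, so that it is respected by the field isomorphism. Once that bookkeeping is in place, the Galois conjugation does all the work and the statement follows from Theorem~\ref{thm:SP}.
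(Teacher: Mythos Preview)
Your proof is correct and rests on the same arithmetic lever as the paper's --- the degree-two extension $\FF(\lambda_i)/\FF$ generated by a root of the irreducible $\Delta_\Psi$ --- but the two arguments deploy it differently. The paper never invokes the Galois isomorphism $\sigma$; instead it argues directly that \emph{each} $E^{\lambda_i}$ is a splitting-polygon: since $\lambda_i\notin\FF$, every line $E^{\lambda_i}_j$ is defined over $\overline\FF$ and not over $\FF$, hence its intersection with any line of $\A$ lies outside $\FF$ unless that line passes through $P_j$; as all singular points of $\A$ are $\FF$-rational, no accidental incidence can be created, so conditions~(2)--(3) of Definition~\ref{def:SP} hold for both $i$ simultaneously. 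You instead read the existence of one splitting-polygon off the very appearance of $\C_\Psi$ in the statement and then transport its ordered combinatorics to the other root via $\sigma$. Your route makes the $\C^\ord_\Psi$-equivalence of $\A^{\lambda_1}$ and $\A^{\lambda_2}$ completely transparent and spares you re-checking the genericity conditions by hand; the paper's route has the (mild) advantage of establishing from scratch that splitting-polygons exist under the hypothesis, rather than extracting this from the well-definedness of $\C_\Psi$. Both arguments then conclude identically by feeding the two distinct splitting-polygons into Theorem~\ref{thm:SP}.
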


\begin{proof}
	Let $\overline{\FF}$ be the decomposition-field of $\Delta_{\Psi}$. Since $\Delta_{\Psi}$ is irreducible in $\FF$ and not zero then $\lambda_1$ and $\lambda_2$ (the two roots of $\Delta_{\Psi}$) are in $\overline{\FF}\setminus\FF$. It follows that the definition field of the lines $E^{\lambda_i}_1,\dots,E^{\lambda_i}_r$, for $i\in\{1,2\}$, is $\overline{\FF}$ (and cannot be $\FF$). This and the construction imply that the coordinates of the intersection points of the $E^{\lambda_i}_j$ with the lines of $\A$ are necessarily in $\overline{\FF}$ (and cannot be in $\FF$), except for the intersection points with the lines passing through $P_i$. Thus, except the $P_i$'s and the $Q_i$'s, only double points will be created when we add the $E^{\lambda_i}_j$ to the arrangement $\A$. In other words, $E^{\lambda_1}$ and $E^{\lambda_2}$ are two distinct splitting-polygons on $\Psi$. The end of the theorem follows from Theorem~\ref{thm:SP}.
\end{proof}

As a straightforward consequence of the previous proof, we have the following corollary.

\begin{corollary}
	Let $(S_1,\dots,S_r)$ and $(P_1,\dots,P_r)$ be a plinth $\Psi$ of an arrangement $\A$ which form a rigid projective system of  $\M^\ord(\A)$. We assume that $\M^\ord(\A)$ admits $d$ connected component, and we consider $\A_1,\dots,\A_d$ representatives of each component, and let $\FF$ be a definition field of the $\A_i$'s. We denote by $\Delta_{\Psi}^1,\dots,\Delta_{\Psi}^d$ the polynomials constructed as before by considering the arrangements $\A_1,\dots,\A_d$. 
	
	If one of the polynomial $\Delta_{\Psi}^i$ is non-trivial and irreducible in $\FF[\lambda]$ then all the $\Delta_{\Psi}^i$'s are also non-trivial and irreducible in $\FF[\lambda]$. Thus, the arrangements $\A_i^{\lambda_j^i}$ are $\C^\ord_\Psi$-equivalent and  $\M^\ord(\C_\Psi)$ splits over $\M^\ord(\A)$, where $\lambda_1^i$ and $\lambda_2^i$ are the two complex roots of $\Delta_\Psi^i$. More precisely, we have $\# \M^\ord(\C_\Psi) = 2 d$ and the arrangements $\A_i^{\lambda_j^i}$, for $i\in\{1,\dots,d\}$ and $j\in\{1,2\}$, are representatives of each connected components.
\end{corollary}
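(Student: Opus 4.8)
The plan is to leverage the previous corollary's proof essentially verbatim, applied componentwise. First I would observe that the polynomial $\Delta_{\Psi}^i$ is built from the coefficients of $\A_i$ by the \emph{same} construction for every $i$ --- it is a universal polynomial expression (a $3\times 3$ determinant whose entries are linear in $\lambda$, with coefficients that are polynomial in the line coefficients of $\A_i$). Hence, if we fix a common definition field $\FF$ of all the $\A_i$'s, each $\Delta_{\Psi}^i$ lies in $\FF[\lambda]$ and has degree at most $2$. The key claim is: if one $\Delta_{\Psi}^{i_0}$ is non-trivial and irreducible over $\FF$, then so is every $\Delta_{\Psi}^i$. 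This is where the main work lies, and I expect it to be the principal obstacle: a priori nothing forces the $\Delta_{\Psi}^i$ to have the same factorization type over $\FF$, since the $\A_i$ are genuinely different arrangements. The resolution must come from the fact that the $\A_i$ are $\C^\ord$-equivalent (they all realize $\M^\ord(\A)$), together with the rigidity hypothesis: the combinatorial type of $\A_i^{\lambda}$ --- in particular whether $E^\lambda$ is a splitting-polygon --- is determined by whether $\lambda$ is a root of $\Delta_{\Psi}^i$, and the \emph{existence} of a splitting-polygon defined over $\FF$ would by Theorem~\ref{thm:SP} (applied with base field $\FF$) produce an arrangement $\C_\Psi$-equivalent to the others but lying in a component of $\M^\ord(\C_\Psi)$ already forced to be reachable over $\FF$; this would give a splitting-polygon on $\A_{i_0}$ defined over $\FF$, contradicting the irreducibility of $\Delta_{\Psi}^{i_0}$. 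So the argument is: a rational ($\FF$-defined) splitting-polygon on any one $\A_i$ propagates, via a $\C^\ord_\Psi$-equivalence and the rigidity of the plinth, to a rational splitting-polygon on $\A_{i_0}$, which is impossible.

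Concretely, I would proceed as follows. Step one: set up $\FF$ as a common definition field and record that each $\Delta_{\Psi}^i \in \FF[\lambda]$ has degree $\le 2$, reproducing the degree bound from the proof of Theorem~\ref{thm:SP}. Step two: suppose for contradiction that some $\Delta_{\Psi}^i$ is not irreducible over $\FF$ --- so either it is trivial, or it has a root $\mu \in \FF$. In either case the construction produces a splitting-polygon $E^\mu$ on the plinth $\Psi$ of $\A_i$ with all new coefficients in $\FF$, hence $\A_i^\mu$ has a definition field equal to $\FF$. Step three: since $\A_{i_0}^{\lambda_1^{i_0}}$ and $\A_i^\mu$ are both realizations of $\C_\Psi$ lying over the (zero-dimensional, by rigidity of $\M^\ord(\A)$) moduli space, and since by Theorem~\ref{thm:irreducible} applied to $\A_{i_0}$ the moduli space $\M^\ord(\C_\Psi)$ over the component $\M^\ord(\A)^{\A_{i_0}}$ has exactly two points whose defining data sits in the quadratic extension $\overline{\FF}_{i_0} \ne \FF$, I would transport $\A_i^\mu$ by the $\C^\ord$-equivalence $\A_i \sim^\ord \A_{i_0}$: because the plinth is a rigid projective system of $\M^\ord(\A)$, there is $\tau \in \PGL_3(\CC)$ carrying the support and pivot-points of $\A_i$ to those of $\A_{i_0}$, and $\tau(E^\mu)$ is then a splitting-polygon on the plinth of $\A_{i_0}$. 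The content of the rigidity is precisely that $\tau$ is defined over $\FF$ (it is the unique projective map matching two $\FF$-rational flags-worth of lines and points), so $\tau(E^\mu)$ is defined over $\FF$ --- contradicting that the splitting-polygons on $\A_{i_0}$ are indexed by roots of the $\FF$-irreducible quadratic $\Delta_{\Psi}^{i_0}$.

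Step four: having established that all $\Delta_{\Psi}^i$ are non-trivial and irreducible over $\FF$, I apply Theorem~\ref{thm:irreducible} to each $\A_i$ separately. This gives, for each $i$, that $\A_i^{\lambda_1^i}$ and $\A_i^{\lambda_2^i}$ are $\C^\ord_\Psi$-equivalent and lie in two distinct connected components of $\M^\ord(\C_\Psi)$, and moreover (from the ``more precisely'' part of Theorem~\ref{thm:irreducible}, together with Proposition~\ref{prop:dimension} forcing $\M^\ord(\C_\Psi)$ to be zero-dimensional) that $\#\M^\ord(\C_\Psi; \M^\ord(\A)^{\A_i}) = 2$. Step five: assemble. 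Since $\M^\ord(\A)$ has $d$ components $\M^\ord(\A)^{\A_1}, \dots, \M^\ord(\A)^{\A_d}$, and $\M^\ord(\C_\Psi)$ splits over each of them into exactly two points, and since any connected path in $\M^\ord(\C_\Psi)$ projects to a path in $\M^\ord(\A)$ (the forgetful map to the sub-arrangement $\A$ is continuous) which must be constant as $\M^\ord(\A)$ is discrete, every component of $\M^\ord(\C_\Psi)$ lies over a single component of $\M^\ord(\A)$; hence $\M^\ord(\C_\Psi)$ has exactly $2d$ components, with representatives $\A_i^{\lambda_j^i}$, $i\in\{1,\dots,d\}$, $j\in\{1,2\}$. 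The delicate point throughout --- and the one I would write out most carefully --- is the $\FF$-rationality of the transfer map $\tau$ in Step three, since that is the only place the hypothesis ``rigid projective system'' does real work beyond mere finiteness.
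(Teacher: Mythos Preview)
Your argument is correct in outline but takes a much longer route than the paper, and in doing so misses the one-line observation that resolves everything directly. The paper's entire proof is: since the plinth is a rigid projective system of $\M^\ord(\A)$, we may normalize the representatives $\A_1,\dots,\A_d$ so that their support lines and pivot points are \emph{literally identical} as subsets of $\CC\PP^2$. But the construction of the lines $E_k^\lambda$ --- and hence the determinant $\Delta_\Psi$ --- depends only on the support and the pivot-points, not on the rest of the arrangement. Therefore $\Delta_\Psi^1 = \cdots = \Delta_\Psi^d$ as polynomials, and the result follows from Theorem~\ref{thm:irreducible} applied componentwise.

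Your Steps~2--3 recover this equality only implicitly, via a contradiction argument that transports a hypothetical $\FF$-rational root of one $\Delta_\Psi^i$ to an $\FF$-rational root of $\Delta_\Psi^{i_0}$. This works, but note a slip in your phrasing: having $\mu$ a root of $\Delta_\Psi^i$ gives only condition~(1) of Definition~\ref{def:SP} (the polygon closes), not conditions~(2)--(3), so you should not call $E^\mu$ a \emph{splitting}-polygon. Fortunately your contradiction only needs the closing condition, which depends solely on the support and pivots and hence transports cleanly under $\tau$. Your concern about the $\FF$-rationality of $\tau$ is legitimate --- it is also implicit in the paper's normalization step --- but once you have seen that $\Delta_\Psi$ is built entirely from the support and pivot data, you can sidestep the issue by observing that after normalizing those to coincide, the polynomials are equal regardless of whether the remaining lines of the $\A_i$ are defined over $\FF$. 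In short: your instinct to transfer via $\tau$ is the right one, but use it to identify the polynomials outright rather than to shuttle individual roots back and forth.
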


\begin{proof}
	Since the plinth $\Psi$ form a rigid projective system of $\M^\ord(\A)$ then the equations of the $E_k^{i,\lambda}$ and $E_k^{j,\lambda}$  are equal for all $i,j\in\{1,\dots,d\}$. In particular, this implies that $\Delta_{\Psi}^1 = \dots = \Delta_{\Psi}^d$. The end follows from Theorem~\ref{thm:irreducible}.
\end{proof}

Unfortunately, the implication obtained in Theorem~\ref{thm:irreducible} is not an equivalence. Indeed, in Section~\ref{sec:successive}, we prove that the Rybnikov arrangements can be constructed using the splitting-polygon method twice. It appears that the polynomial $\Delta_{\Psi}$ of the second splitting-polygon is reducible in $\FF$ and that the moduli space splits. In fact the hypothesis: $\Delta_{\Psi}$ is irreducible, is only used to prove that $\A^{\lambda_1}$ and $\A^{\lambda_2}$ are $\C_\Psi^\ord$-equivalent. Nevertheless, throughout all the computations made for this paper, we notice some cases where the reducibility of $\Delta_{\Psi}$ seems to implies that Definition~\ref{def:SP}~(2) is not verified. They are: the line of the support are concurrent, and the pivot-points are collinear. We didn't succeeded to prove the implications, and it is possible that they are only a consequence of the low number of lines in the examples studied.

This being said, there is one case where we can state that adding a splitting-polygon will not induce a splitting of the moduli space. Let $\A$ be a line arrangement and let $\Psi$ be a plinth of $\A$. We denote by $E^{\lambda_1}$ and $E^{\lambda_2}$ the two splitting-polygons associated to $\Psi$. In the arrangement $\A^{\lambda_1}=\A\cup E^{\lambda_1}$, $\Psi$ is still a plinth and to add the splitting-polygon $E^{\lambda_2}$ on $\Psi$ will not split the moduli space $\M^\ord(\C_{\Psi,\Psi})$ over $\M^\ord(\C_\Psi)^{\A^{\lambda_1}}$. In other words, to add a second splitting-polygon over a first one with the same plinth will not induce a splitting of the moduli space. Nevertheless, we will see in Section~\ref{sec:successive} that it is possible to add two splitting-polygons (with different plinths) to an arrangement and obtain a moduli space which splits twice.

\subsection{Splitting-polygon pattern}\mbox{}

The presence of a splitting-polygon in an arrangement is a strong indicator of a non-connected moduli space. We can thus define the \emph{splitting-polygon pattern} as the combinatorial version the presence of a plinth and a splitting-polygon.

\begin{definition}\label{def:SPP}
	A \emph{splitting-polygon pattern} in a line combinatorics $\C=(\L,\P)$ is formed by three tuples: the \emph{support} $S=(S_1, \dots , S_r) \subset \L$,  the \emph{polygon} $E=(E_1, \dots , E_r) \subset \L$ and the \emph{pivot-points} $(P_1, \dots, P_r) \subset \P_{\geq 3}$ such that
	\begin{enumerate}
		\item $S \cap E = \emptyset$,
		\item for all $ i \in \{1,\dots,r\} $, the cardinal of  $ \{P \in \P_{\geq 3} \mid E_i \in P\} $ is $3$,
		\item for each pivot-point $P_i$, we have $S_i\notin P_i$, $S_{i+1}\notin P_i$ and $P_i \cap E = \{E_i\}$,
		\item for all $ i \in \{1,\dots,r\} $, it exists $Q_i \in \P_3$, such that $Q_i=\{E_{i-1},S_i,E_i\}$, 
	\end{enumerate}
	where all the indices are considered modulo $r$.
\end{definition}

\begin{example}\label{ex:ML_SPP}
	The MacLane line combinatorics $\C_\ML$ is described by $\L=\{L_1,\dots,L_8\}$ and 
	\begin{equation*}
		\P=\left\{
			\begin{array}{c}
				\{ L_1, L_2, L_3 \}, \{ L_1, L_4, L_5 \}, \{ L_1, L_6, L_7 \}, \{ L_1, L_8 \},  \{ L_2, L_4 \}, \{ L_2, L_5, L_7 \}, \\
				 \{L_2, L_6,L_8\}, \{L_3, L_4, L_6 \}, \{ L_3, L_5, L_8 \}, \{ L_3, L_7 \}, \{ L_4, L_7, L_8 \}, \{ L_5, L_6 \}
			\end{array}
		\right\}.
	\end{equation*}
	The tuples $S=(L_1,L_2,L_4)$, $E=(L_6,L_8,L_7)$ and $\P=(\{L_3,L_4,L_6\},\{L_3,L_5,L_8\},\{L_2,L_5,L_7\})$ form a splitting-polygon pattern in the MacLane combinatorics.
\end{example}

\section{Applications on arrangements with few lines}\label{sec:examples}

In this section, we show that all the small examples of line arrangements with a non-connected moduli space obtained in the classification of Ye~\cite{Ye}, can be constructed using the technique of the splitting-polygon. Naturally, the first example is the MacLane arrangements~\cite{Mac}. They are the smallest arrangements with a non-connected moduli space, it appears that they are also the smallest arrangements which contain a splitting-polygon pattern.  

\subsection{MacLane arrangements}\label{sec:MacLane}\mbox{}

Let $\A=\{L_1,\dots,L_5\}$ be the arrangement of $5$ lines which contains two triple points. We assume that these two triple points of $\A$ are $\{L_1,L_2,L_3\}$ and $\{L_1,L_4,L_5\}$.  As we have seen in Example~\ref{ex:plinth}, the tuples $(L_1,L_2,L_4)$ and $(\{L_3,L_4\},\{L_3,L_5\},\{L_2,L_5\})$ form a plinth $\Psi$ of $\A$.

A projective transformation fixes 4 non collinear points. Thus, the action of $\PGL_3(\CC)$ fixes the four double points of $\A$, and, as a consequence of the combinatorics of $\A$, it also fixes the two triples points. In other words, the dimension of $\M^\ord(\A)$ is zero. So by Proposition~\ref{propo:rigidity}, we deduce the $\Psi$ form a rigid projective system of $\M^\ord(\A)$ (which is connected).

Since $\M^\ord(\A)$ is connected and since its dimension is zero, we can work with a representative of the unique class of $\M^\ord(\A)$. Let $\A$ be the arrangement described by the following equations.
\begin{equation*}
	\begin{array}{lll}
		L_1: z=0 \hspace{2cm} & L_2: x=0 \hspace{2cm} & L_3: x-z=0 \\
		L_4: y=0 \hspace{2cm} & L_5: y-z=0 \hspace{2cm} &
	\end{array}
\end{equation*}
Thus, the singular points have the following coordinates.
\begin{equation*}
	\begin{array}{lll}
		\{L_1,L_2,L_3\}: [0:1:0] \hspace{1cm} & \{L_1,L_4,L_5\}: [1:0:0] \hspace{1cm} & \{L_2, L_4\}: [0:0:1] \\
		\{L_2,L_5\}: [0:1:1] \hspace{1cm} & \{L_3, L_4\}: [1:0:1]  \hspace{1cm} & \{L_3,L_5\}: [1:1:1] 
	\end{array}
\end{equation*}

Let $Q^\lambda_1 = [1:\lambda:0]$ be a generic point of $L_1$ (we can assume that the two first coordinates are non-zero since $Q_1^\lambda$ will need to be different of $\{L_1,L_2,L_3\}$ and $\{L_1,_4,L_5\}$). Following the construction made in Section~\ref{sec:construction}, we deduce that the equations of $E^\lambda_1$, $E^\lambda_2$ and $E^\lambda_3$ are respectively $\lambda x + y  - \lambda z = 0$,  $(1-\lambda)x -y + \lambda z = 0$ and $(\lambda - 1)x + \lambda y -\lambda z = 0$. This implies that the polynomial $\Delta_{\Psi}$ is given by:
\begin{equation*}
	\Delta_{\Psi}(\lambda) = 
	\begin{vmatrix}
		0 & \lambda & (\lambda - 1) \\
		0 & 1 & \lambda \\
		1 & -\lambda & -\lambda \\
	\end{vmatrix}
	= \lambda^2 - \lambda + 1.
\end{equation*}
Let $\lambda_1=\frac{1+i\sqrt{3}}{2}$ and $\lambda_2=\frac{1-i\sqrt{3}}{2}$ be the two roots of $\Delta_{\Psi}$. Since $\Delta_{\Psi}$ is irreducible in $\QQ$ (the field of definition of $\A$), then by the first part of Theorem~\ref{thm:irreducible}, we deduce that $\A^{\lambda_1}$ and $\A^{\lambda_2}$ are $\C_\Psi^\ord$-equivalent, and it is easy to check that this shared combinatorics $\C_\Psi^\ord$ is the MacLane combinatorics (see Figure~\ref{fig:MacLane}). Using the second part of Theorem~\ref{thm:irreducible}, we obtain that the ordered moduli space of $\C_\Psi$ splits over $\M^\ord(\A)$. More particularly, we have $\# \M^\ord(\C_\Psi)=2$.

\begin{figure}[h!]
	\begin{tikzpicture}
		\begin{scope}
			\draw (0,-1) -- (0,4) to[out=90, in=225] (1,6) -- (1.5,6.5);
			\node[below] at (0,-1) {$L_2$};
			\draw (2,-1) -- (2,4) to[out=90, in=-45] (1,6) -- (0.5,6.5);
			\node[below] at (2,-1) {$L_3$};
			
			\draw (-3,0) -- (4,0) to[out=0, in=225] (6,1) -- (6.5,1.5);
			\node[left] at (-3,0) {$L_4$};
			\draw (-1,2) -- (4,2) to[out=0, in=135] (6,1) -- (6.5,0.5);
			\node[left] at (-1,2) {$L_5$};
			
			\draw (-1,6) -- (5,6) to[out=0,in=90] (6,5) -- (6,-3);
			\node[left] at (-1,6) {$L_1$};
			
			\node (P1) at (2,0) {$\bullet$};
			\node[above right] at (P1) {$P_{3,4}$};
			\node (P2) at (2,2) {$\bullet$};
			\node[below right] at (P2) {$P_{3,5}$};
			\node (P3) at (0,2) {$\bullet$};
			\node[below right] at (P3) {$P_{2,5}$};
			
			\node (Q1) at (6,-2) {$\circ$};
			\node[below left] at (Q1) {$Q_1^\lambda$};
			\line{(Q1)}{(P1)}{20}{60}{$E_1^\lambda$};
			
			\node (Q2) at (0,1) {$\circ$};
			\node at (0.25,0.55) {$Q_2^\lambda$};
			\line{(P2)}{(Q2)}{50}{130}{$E_2^\lambda$};
			
			\node (Q3) at (-2,0) {$\circ$};
			\node[above left] at (Q3) {$Q_3^\lambda$};
			\line{(Q3)}{(P3)}{40}{220}{$E_3^\lambda$};
			
			\node (R1) at (4,6) {$\circ$};
			\node[above left] at (R1) {$R_1^\lambda$};
			
			\draw[<->, dashed] (R1.23) to[out=40, in =45] (Q1.north east);
			
		\end{scope}
	\end{tikzpicture}
	\caption{Construction of the MacLane arrangement by adding a splitting-triangle on a line arrangements of $5$ lines with two triple points. \label{fig:MacLane}}
\end{figure}
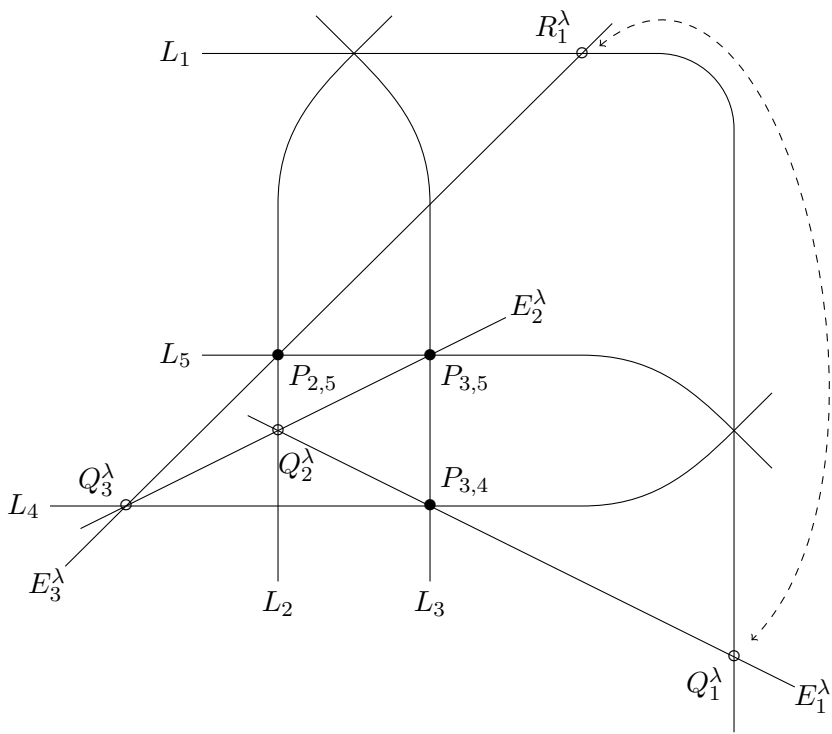

\subsection{Nine lines arrangements}\mbox{}

According to the classification of the complement homeomorphism types of the 9 lines arrangements obtained by Ye in~\cite{Ye}, there are 4 types of 9 lines arrangements: arrangements whose moduli space is irreducible (and so connected), arrangements which contain a MacLane arrangement, the Falk-Sturmfels arrangements and the Nazir-Yoshinaga arrangements. The case of the MacLane arrangements have been studied in the previous section. We will focus here on the two last types. 

We consider the arrangement $\A$ of 6 lines defined over $\QQ$ given by the following equations.
\begin{equation*}
	\begin{array}{lll}
	 L_1: z=0  \hspace{2cm} & L_2: x=0 \hspace{2cm} & L_3: x-z=0 \\
	L_4: y=0  \hspace{2cm} & L_5: y-z=0 \hspace{2cm} & L_6: x-y=0
	\end{array}
\end{equation*}
As seen in the previous section, the action of $\PGL_3(\CC)$ fixes the lines $L_1,\dots,L_5$. Since $L_6$ can be define from the intersection points of the fifth first lines, then the action of $\PGL_3(\CC)$ also fixes $L_6$. This implies that the moduli space of $\A$ has dimension~0.

The difference between the Falk-Sturmfels and the Nazir-Yoshinaga arrangements will be made from the plinth considered. Additionally, their combinatorics can be distinguished from the MacLane combinatorics by the structure of their plinths. Indeed, in the following cases the plinths considered require $6$ lines, while the MacLane arrangements plinth requires only $5$ lines.

\subsubsection{Falk-Sturmfels arrangements}
We consider here the plinth $\Psi_1$ of $\A$ whose support is the tuple $(L_1,L_2,L_5)$ and pivot-points are $(\{L_3,L_4\},\{L_1,L_6\},\{L_2,L_4,L_6\})$. Following the construction of Section~\ref{sec:construction}, let $Q_1^\lambda=[1:\lambda:0]$ be a generic point of $L_1$, then the lines $E^\lambda_1$, $E^\lambda_2$ and $E^\lambda_3$ are respectively given by the equations: $ -\lambda x + y + \lambda z=0$, $x - y - \lambda z =0$ and $-x + (\lambda +1) y =0$. This implies that the polynomial $\Delta_\Psi$ is $\lambda^2+\lambda-1$, which is irreducible in $\QQ$. So, by Theorem~\ref{thm:irreducible}, if $\lambda_1$ and $\lambda_2$ are the two roots of $\Delta_{\Psi}$, then $\A^{\lambda_1}$ and $\A^{\lambda_2}$ are representatives of the two connected components of $\M^\ord(\C_{\Psi_1})$. These arrangements admits 8 triples points and 1 quadruple points, and are the Falk-Sturmfels arrangements. They are the unique 9 lines arrangements defined in a real quadratic number field.

\subsubsection{Nazir-Yoshinaga arrangements}
In this case, we consider the plinth $\Psi_2$ of $\A$ whose support is $(L_1,L_2,L_4)$ and pivot-points are $(\{L_3,L_4\},\{L_1,L_6\},\{L_2,L_5\})$. We consider again the generic point $Q_1^\lambda=[1:\lambda:0]$ in $L_1$. The line $E^\lambda_1$, $E^\lambda_2$ and $E^\lambda_3$ are respectively given by the equations  $ -\lambda x + y + \lambda z=0$, $x - y - \lambda z =0$ and $x + \lambda y - \lambda z = 0$. Thus, the polynomial $\Delta_\Psi$ is $\lambda^2+1$, which is irreducible in $\QQ$. Remark here that the equations of the Falk-Sturmfels arrangements and those of the Nazir-Yoshinaga arrangements only different by the last line and the definition field. Using Theorem~\ref{thm:irreducible}, we obtain that if $\lambda_1$ and $\lambda_2$ are the two roots of $\Delta_{\Psi}$, then $\A^{\lambda_1}$ and $\A^{\lambda_2}$ are representatives of the two connected components of $\M^\ord(\C^{\Psi_2})$. These arrangements admits only triples points (10 in total), this allows to distinguish their combinatorics from those of the Falk-Sturmfels arrangements. They are the Nazir-Yoshinaga arrangements since they are define over $\QQ[i]$.

\begin{rmk}
	A case-by-case study can prove that, using the technique of the splitting-polygon, we cannot construct other types of $9$ line arrangements with non-connected moduli space. This is in accordance with the classification of Ye~\cite{Ye}.
\end{rmk}

\subsection{A splitting-quadrilateral}\mbox{}

In all the previous examples, we used only splitting-triangles (i.e splitting-polygon with $r=3$). So let illustrate the notion of splitting-polygon for $r=4$. We consider the arrangement $\A$ formed by the following equations.
\begin{equation*}
	\begin{array}{llll}
	L_1: z=0 \hspace{1cm} 
	& L_2: x=0 \hspace{1cm}
	& L_3: x-z=0\hspace{1cm}
	& L_4: y=0 \\
	L_5: y-z=0 \hspace{1cm}
	& L_6: x-y=0 \hspace{1cm}
	& L_7: x-y+z=0 \hspace{1cm}
	&
	\end{array}
\end{equation*}

Let $\Psi$ be the plinth defined by the tuples $(\{L_3,L_7\},\{L_1,L_6,L_7\},\{L_2,L_4,L_6\},\{L_3,L_4\})$ for the pivot-points and by $(L_2,L_4,L_3,L_5)$ for the support. We choose a generic point $Q_1^\lambda=[0 : 1 : \lambda]$ in the line~$L_1$. Following the construction of Section~\ref{sec:construction}, we have:
\begin{align*}
	& E_1^\lambda: (2\lambda -1) x - \lambda y + z =0 \quad & 
	& E_2^\lambda: (2\lambda -1) x - (2\lambda -1) y + z = 0 \\
	& E_3^\lambda: -2 \lambda x + (2\lambda -1) y = 0 \quad & 
	& E_4^\lambda: 2 \lambda x + y - 2\lambda z = 0
\end{align*}
The polynomial $\Delta_\Psi$ is $2\lambda^2 - 1$, which is irreducible over $\QQ$ (the field of definition of $\A$). Then, by Theorem~\ref{thm:irreducible}, for $\lambda_1=\sqrt{2}/2$ and $\lambda_2=-\sqrt{2}/2$ (the roots of $\Delta_\Psi$), the lines $(E_1^{\lambda_i},\dots,E_4^{\lambda_i})$ form a splitting-quadrilateral on the plinth $\Psi$. We can remark that the roots of the polynomial $\Delta_\Psi$ are real, we can thus picture the two arrangements $\A^{\lambda_1}$ and $\A^{\lambda_2}$ (see Figure~\ref{fig:splitting_quadri}).

\begin{figure}[h!]
	\centering
	\includegraphics[scale=0.8]{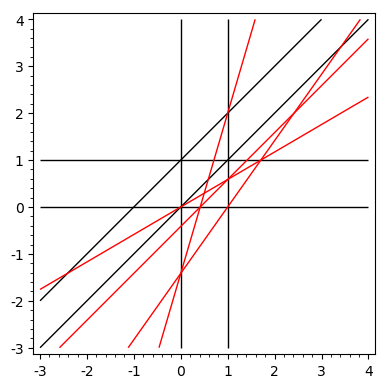}
	\hspace{0.25cm}
	\includegraphics[scale=0.8]{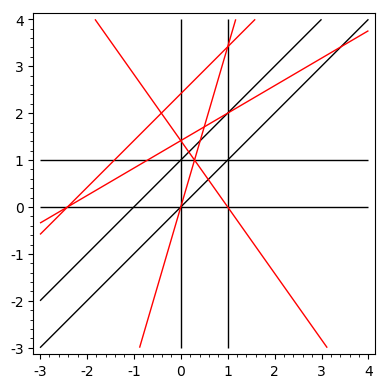}
	\caption{The arrangements $\A^{\lambda_1}$ (left), and $\A^{\lambda_2}$ (right), the line $L_1$ is the line at infinity. In red are pictured the splitting-quadrilateral. \label{fig:splitting_quadri} }
\end{figure}

\begin{rmk}
	A very similar construction is underlying the arrangements with a non-connected moduli spaces produced by the author and Viu-Sos in~\cite{GueViu:configurations}. This paper, and the discovery of the arrangements given in Section~\ref{sec:successive}, were the starting point of the present work.
\end{rmk}

\section{Successive splittings}\label{sec:successive}

In this section, we explore some examples of arrangements constructed by adding two successive splitting-polygons (with different plinth due to the last paragraph of Section~\ref{sec:construction}). This allows to construct arrangements whose moduli space has 4 connected components. The first such example is naturally the Rybnikov arrangements.

\subsection{Rybnikov arrangements}\mbox{}

As we have seen in Section~\ref{sec:MacLane}, the MacLane arrangements can be constructed from the arrangement $\A$ formed by 5 lines with two triple points. In this section, we will see that it is easy to construct the Rybnikov arrangements~\cite{Ryb} using the splitting-polygon technique twice.

Let $\A$ be the arrangement of $7$ lines which contains $3$ triple points on $L_1$: $\{L_1,L_2,L_3\}$, $\{L_1,L_4,L_5\}$ and $\{L_1,L_6,L_7\}$, and only double points outside. Following the construction made in Section~\ref{sec:MacLane}, we can add a splitting-polygon on the plinth $\Psi_1$ whose support is $(L_1,L_2,L_4)$ and pivot-points are $(\{L_3,L_4\},\{L_3,L_5\},\{L_2,L_5\})$. At this first step, the polynomial $\Delta_{\Psi_1}$ is irreducible, thus by Theorem~\ref{thm:irreducible}, the arrangements $\A^{\lambda_1}$ and $\A^{\lambda_2}$ are representatives of the two connected components of $\M^\ord(\C_{\Psi_1})$, and are defined over $\FF=\QQ[\zeta]$, where $\zeta$ is a root of $\lambda^2-\lambda+1$. 

This construction produces arrangements which contain a MacLane arrangement (formed by the lines $L_1,\dots,L_5$ together with the lines $E_1^\lambda, E_2^\lambda, E_3^\lambda$). In order to construct the Rybnikov arrangements, we need to produce a second MacLane arrangement from $\A^{\lambda_i}$. To do so, we will consider another plinth $\Psi_2$ of $\A$ (and thus of $\C_{\Psi_1	}$ too) which is, in $\A$, equivalent\footnote{Here by equivalent, we mean that it exists an automorphism of the combinatorics $\C(\A)$ which sent $\Psi_1$ on $\Psi_2$} to $\Psi_1$ but not in $\C_{\Psi_1}$. This plinth is given by $(L_1,L_2,L_6)$ for the support and by $(\{L_3,L_6\}, \{L_3,L_7\}, \{L_2,L_7\})$ the the pivot-points. As already announced at the end of Section~\ref{sec:construction}, the polynomial $\Delta_{\Psi_2}$ is reducible in the definition field $\FF$ of $\A^{\lambda_i}$. Indeed, the conditions imposed by the plinth $\Psi_2$ are projectively equivalent to those imposed by the plinth $\Psi_1$. So it seems natural that the decomposition-field of $\Delta_{\Psi_2}$ is the same as those of $\Delta_{\Psi_1}$, and so is $\FF$ too. Nevertheless, if the line $L_6$ and $L_7$ are generic enough with the lines $L_4$ and $L_5$, then the tuples of lines $(E_1^{\lambda_i,\lambda_j}, E_2^{\lambda_i,\lambda_j}, E_3^{\lambda_i,\lambda_j})$, for $j\in\{1,2\}$, form two distinct splitting-polygons on the plinth $\Psi_2$. Thus, by Theorem~\ref{thm:SP}, we obtain that the moduli space $\M^\ord(\C^{\Psi_1,\Psi_2}; \M^\ord(\C_\Psi)^{\A^{\lambda_i}})$, for $j\in\{1,2\}$, splits over $\M^\ord(\C_{\Psi_1})^{\A^{\lambda_i}}$. As a consequence, we have that $\# \M^\ord(\C^{\Psi_1,\Psi_2}) = 4$.

\subsection{MacLane splittings}\mbox{}

The addition of a splitting-polygon on an arrangement which contains the MacLane arrangement in order to construct an arrangement whose moduli space has four connected components have been successfully use in the previous section to re-construct the Rybnikov arrangements. In that construction, the plinth considered to support the second splitting-polygon is, in fact, a plinth of the original arrangement of $7$ lines. However, we could have consider a plinth which contains some part of the first splitting-polygon. In this section, we will explore some cases of splitting-polygons added directly on the MacLane arrangements. This construction will produce line arrangements of $11$ lines whose moduli space have four connected components. Arithmetically, these arrangements will be between the Rybnikov arrangements which are not Galois conjugated in their field of definition, and the arrangements obtain by the author in~\cite{Gue:ZP,Gue:LLN} which are arithmetic quadruples in the $5$th cyclotomic field. Indeed, they will be arithmetic quadruples, but with the Klein group as Galois group of their field of definition.

We consider $\ML_1$ and $\ML_2$: the two MacLane arrangements as constructed in Section~\ref{sec:MacLane}. Their field of definition is the number field $\QQ[\zeta]$, where $\zeta$ is a root of $\lambda^2 - \lambda +1$. We denote by $\C_\ML$ their combinatorics. By Proposition~\ref{prop:dimension}, the dimension of $\M^\ord(\C_\ML)$ is zero. Thus, by Proposition~\ref{propo:rigidity},  any subsets $\{L_{i_1},\dots,L_{i_r}\} \subset \ML_k$ and $\{P_{j_1},\dots,P_{j_r}\}\in\sing(\ML_k)$ form a rigid projective system of $\M^\ord(\C_\ML)^{\ML_k}$. They can thus be used as a plinth to support a splitting-polygon (as soon as the conditions of Definition~\ref{def:SP} are verified). Up to our computation, there are 56 different ways to add a splitting-triangle on the MacLane arrangements. In this list of arrangements, six of them kept our attention for a particular topological property (see Theorem~\ref{thm:pi_1}). 

Consider three line $S=(L_{i_1},L_{i_2},L_{i_3})$ of $\C_\ML$ which are not concurrent. Let $(P_1,P_2,P_3)$ be three points which form, with $S$, a plinth $\Psi$ of $\C_\ML$. In addition, we assume that $L_{i_{k-1}}\in P_i$ (where the indices of $i$ are considered modulo $3$), and that the $P_i$'s are not collinear.  Up to automorphism of the combinatorics, it exists $6$ such plinths which give rise to a splitting-polygon. These particular plinths are listed in Table~\ref{tab:plinths}. The first exponent in the name of the arrangements corresponds to the first splitting-polygon (and so on which MacLane arrangement it is built), while the second is associated to the second splitting-polygon. In each case, the polynomial $\Delta_{\Psi}$ is irreducible in $\QQ[\zeta]$, thus these arrangements are defined over a number field of degree 4. To change $i$ by $i+1 \mod 2$ (resp. $j$ by $j+1 \mod 2$) corresponds to take the complex conjugate of the equations of the first (resp the second) splitting-polygon.

\begin{table}[h!]
	\begin{equation*}
		\def\arraystretch{1.2}
		\begin{array}{|c|c|c|}
			\hline
			\text{Support} & \text{Pivot-points} & \text{Arrangement's name} \\ \hline
			(1, 2, 4) & (\{3, 4, 6\}, \{1, 6, 8\}, \{2, 5, 8\}) & \ML_1^{i,j} \\ \hline
			(1, 2, 4) & (\{4, 7, 8\}, \{1, 6, 8\}, \{2, 6, 7\}) & \ML_2^{i,j} \\ \hline
			(1, 2, 4) & (\{4, 7, 8\}, \{1, 7\}, \{2, 5, 8\}) & \ML_3^{i,j} \\ \hline
			(1, 2, 5) & (\{5, 6\}, \{1, 6, 8\}, \{2, 4\}) & \ML_4^{i,j} \\ \hline
			(1, 2, 5) & (\{3, 5, 7\}, \{1, 6, 8\}, \{2, 6, 7\}) & \ML_5^{i,j} \\ \hline
			(1, 2, 5) & (\{5, 6\}, \{1, 7\}, \{2, 6, 7\}) & \ML_6^{i,j} \\ \hline
		\end{array}
	\end{equation*}
	\caption{Particular plinths of the MacLane combinatorics.\label{tab:plinths}}
\end{table}

Following the definition of Marco in~\cite{Mar:pencils}, all these arrangements are homologically rigid (this depends only on the combinatorics $\C_{\ML,\Psi}$). This means that, if it exists an isomorphism between $\pi_1(\CC\PP^2\setminus \ML_k^{i,j})$ and  $\pi_1(\CC\PP^2\setminus \ML_k^{i',j'})$, for $i,j,i',j'\in\{1,2\}$, then it induces the identity on the Abelianization of these groups. By applying the Alexander Invariant test of level 2 (described in~\cite{ACCM:Rybnikov,ACGM:arithmetic}), we obtain the following theorem. We don't give here more details about the proof, since the strategy is the exactly the same as in \cite{ACCM:Rybnikov,ACGM:arithmetic,Gue:LLN}, and since the author used the same program to perform the computation. We refer to these articles for more details (the construction of the Alexander Invariant test is done in~\cite{ACCM:Rybnikov} while a Sagemath code is given in~\cite{ACGM:arithmetic}).
	
\begin{theorem}\label{thm:pi_1}
	For a fixed $k\in\{1,\dots,6\}$, if $ i+j \not\equiv i'+j' \mod 2$ then
	\begin{equation*}
		\pi_1(\CC\PP^2\setminus \ML_k^{i,j}) \not\simeq \pi_1(\CC\PP^2\setminus \ML_k^{i',j'}).
		\
	\end{equation*}
\end{theorem}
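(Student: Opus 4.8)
The plan is to distinguish the fundamental groups by an arithmetic invariant attached to each arrangement and its field of definition, following the strategy of \cite{ACCM:Rybnikov,ACGM:arithmetic,Gue:LLN}. First I would record that each of the arrangements $\ML_k^{i,j}$ is defined over the degree-$4$ number field $\K=\QQ[\zeta,\eta]$, where $\zeta$ is a root of $\lambda^2-\lambda+1$ (coming from the first splitting-polygon) and $\eta$ a root of the irreducible polynomial $\Delta_{\Psi}\in\QQ[\zeta][\lambda]$ (coming from the second splitting-polygon, which is irreducible over $\QQ[\zeta]$ by hypothesis). The Galois group of $\K/\QQ$ is then the Klein four-group $\ZZ/2\times\ZZ/2$, the first factor acting by $\zeta\mapsto\bar\zeta$ and the second by $\eta\mapsto\bar\eta$. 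By construction of the two successive splitting-polygons (Theorem~\ref{thm:irreducible} applied twice), changing $i$ into $i+1$ corresponds exactly to complex-conjugating the equations of the first splitting-triangle, i.e. applying the first generator of the Galois group; changing $j$ into $j+1$ corresponds to applying the second generator. Hence the four arrangements $\ML_k^{i,j}$, $i,j\in\{1,2\}$, form a single Galois orbit, and the condition $i+j\equiv i'+j'\bmod 2$ picks out exactly the pairs related by the even subgroup (the diagonal $\ZZ/2$), i.e. by the element $(\zeta,\eta)\mapsto(\bar\zeta,\bar\eta)$ together with the identity.

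Next I would invoke homological rigidity: since all these arrangements share the combinatorics $\C_{\ML,\Psi}$, and this combinatorics is homologically rigid in the sense of Marco~\cite{Mar:pencils}, any isomorphism $\pi_1(\CC\PP^2\setminus\ML_k^{i,j})\simeq\pi_1(\CC\PP^2\setminus\ML_k^{i',j'})$ must induce the identity on the abelianizations. This reduces the problem to showing that a finer invariant of the pair (fundamental group, marking by the meridians) distinguishes the two classes. The invariant to use is the level-$2$ Alexander Invariant test described in \cite{ACCM:Rybnikov} (with the \texttt{Sagemath} implementation of \cite{ACGM:arithmetic}): one computes the truncated Alexander invariant of the complement up to order $2$, which is a module over $\ZZ[t_1^{\pm1},\dots,t_{11}^{\pm1}]$ presented from a braid monodromy of the arrangement, and one extracts the finite set of possible specializations compatible with the homological rigidity. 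I would run this computation for one representative in each Galois orbit, exactly as in \cite{Gue:LLN}.

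The output of the test is a finite collection of numerical invariants whose values lie in the field of definition and are permuted by the Galois action. I would then argue that the invariant computed for $\ML_k^{i,j}$ and for $\ML_k^{i',j'}$ agree if and only if the corresponding points of the Galois orbit are fixed by the same subgroup, which happens precisely when $i+j\equiv i'+j'\bmod 2$; when $i+j\not\equiv i'+j'$, the two arrangements lie in different orbits under the even subgroup and the test returns genuinely different values, so no isomorphism of fundamental groups respecting the abelianization can exist. Combining this with homological rigidity gives $\pi_1(\CC\PP^2\setminus\ML_k^{i,j})\not\simeq\pi_1(\CC\PP^2\setminus\ML_k^{i',j'})$, which is the statement.

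The main obstacle is purely computational rather than conceptual: one must actually carry out the braid-monodromy computation and the level-$2$ Alexander Invariant test for each of the six families, and verify that the discriminating invariant is non-trivial (i.e. that the test does not collapse and return the same value for all four arrangements, which \emph{a priori} could happen for a homologically rigid combinatorics). Since, as noted in the text, the strategy and the program are identical to those of \cite{ACCM:Rybnikov,ACGM:arithmetic,Gue:LLN}, I would not reproduce the computation here but simply certify that the test succeeds for each $k\in\{1,\dots,6\}$; the delicate point worth a remark is the bookkeeping that matches the two Galois generators with the two indices $i$ and $j$, which is precisely what makes the parity condition $i+j\not\equiv i'+j'$ the correct dividing line.
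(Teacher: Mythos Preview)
Your proposal follows essentially the same route as the paper: invoke homological rigidity in the sense of \cite{Mar:pencils}, then apply the level-$2$ Alexander Invariant test of \cite{ACCM:Rybnikov,ACGM:arithmetic} computationally, exactly as in \cite{Gue:LLN}. The paper does not supply any argument beyond this and explicitly defers to the same computation.

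One cautionary remark on your third paragraph: the Galois-theoretic framing is a helpful explanation of \emph{why} the parity $i+j\bmod 2$ is the natural dividing line (namely, the diagonal element of the Klein group is precisely complex conjugation, which yields a homeomorphism and hence forces the ``if'' direction), but the ``only if'' half of your ``agree if and only if'' is \emph{not} a consequence of the Galois structure alone. A non-conjugation Galois automorphism gives no a priori relation between the complements, and the Alexander Invariant test could in principle fail to separate them; that it does separate them is exactly the content of the computation. You acknowledge this in your final paragraph, so the proposal is sound, but the phrasing in paragraph three suggests more is being deduced from arithmetic than actually is.
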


To our knowledge, these arrangements never appear in the literature before. Due to their particular arithmetic property, it would be interesting to see if the invariants developed by Bannai, Shirane and Tokunaga~\cite{Ban:splitting,Shi:splitting,Tok:elliptic} could distinguish their topology. Furthermore, neither the linking-invariants~\cite{AFG:invariant,Gue:LLN} nor the torsion order of the first lower central series quotients of their fundamental groups~\cite{Suc:enumerative,AGV:torsion} can distinguish it.

\section{Acknowledgments}

The author is supported by a post-doctoral grant associated to the JSPS KAKENHI Grant Number 17H06128 (Grant-in-Aid for Scientific Research (S)) leads by Prof. Saeki.


\bibliographystyle{plain}
\bibliography{biblio}

\end{document}